\def \erre {{\mathbb {R}}}
\def \elle {{\mathscr {L}}}
\def \effe {{\mathscr {F}}}
\def \picorsivo {{\mathscr {P}}}
\def \esse {{\mathscr {S}}}
\def \acca{{\mathcal {H}}}
\def\erren{{\erre^{ {n} }}}
\newcommand{\tende}{\rightarrow}
\newcommand{\ttende}{\longrightarrow}
\newcommand{\enne} {\mathbb{N}}
\newcommand{\frecciaf} {\longmapsto}
\newcommand\partiali{\partial_{x_i}}
\newcommand\partialj{\partial_{x_j}}
\newtheorem{theorem}{Theorem}
\newtheorem{corollary}[theorem]{Corollary}
\newtheorem{lemma}[theorem]{Lemma}
\theoremstyle{remark}
\newtheorem{remark}[theorem]{Remark}
\theoremstyle{definition}
\newcounter{tmp}
\numberwithin{equation}{section}
\title[Harnack inequality for hypoelliptic operators]{Harnack inequality for 
hypoelliptic second order partial differential operators}
\author{Alessia E. Kogoj}
\address {Dipartimento di Ingegneria dell'Informazione, Ingegneria Elettrica e Matematica Applicata, Universit\`a degli Studi di Salerno, Via Giovanni Paolo II, 132, IT-84084 Fisciano (SA) - Italy}
\email{alessia.kogoj@gmail.com}
\author{Sergio Polidoro}
\address {Dipartimento di Scienze Fisiche, Informatiche e Matematiche - 
Universit\`{a} di Modena e Reggio Emilia - 
via Campi 213/b,  IT-41125 Modena - Italy}
\email{sergio.polidoro@unimore.it}
\subjclass[2010]{35H10; 35K10; 31D05}
\keywords{Harnack inequality, Hypoelliptic operators, Potential theory}
\begin{document}


\begin{abstract} 

We consider nonnegative solutions $u:\Omega\longrightarrow \mathbb{R}$ of second order hypoelliptic  equations 
\begin{equation*} \mathscr{L} u(x) =\sum_{i,j=1}^n  \partial_{x_i} \left(a_{ij}(x)\partial_{x_j} u(x) \right)  + \sum_{i=1}^n b_i(x) \partial_{x_i} u(x) =0,
\end{equation*}
where $\Omega$ is a bounded open subset of $\mathbb{R}^{n}$ and  $x$ denotes the point of $\Omega$.
For any fixed $x_0 \in \Omega$, we prove a Harnack inequality of this type
$$\sup_K u \le C_K u(x_0)\qquad \forall \ u \ \mbox{ s.t. } \  \mathscr{L} u=0, u\geq 0,$$
where $K$ is any compact subset of the interior of the $\mathscr{L}$-{\it propagation set of $x_0$} and the constant $C_K$ does not depend on $u$. 

\end{abstract}
\maketitle

\section{Introduction}

We consider second order partial differential operators $\elle$ acting on functions $u \in C^2 ( \Omega )$ 
as follows
\begin{equation} \label{operatore} 
  \elle u(x) : = \sum_{i,j=1}^n \partiali \left(a_{ij}(x)\partialj u(x) \right) + 
\sum_{i=1}^n b_i(x) \partiali u(x)  
\end{equation}
for $x$ belonging to any open {\it bounded} subset $\Omega$ of $\erren$. The coefficients $a_{ij}, b_i$ are real 
functions and belong to $C^\infty(\overline \Omega)$ for $1 \le i,j \le n$. Moreover, $A := (a_{ij})$ is a $n \times n$ 
symmetric and non-negative matrix. We also assume 
the following hypotheses:
\begin{itemize}

\item[(H1)]  { \it $\elle - \beta$ and $\elle^*$ are hypoelliptic} for every constant $\beta \ge 0$;
 
\item[(H2)] $\inf_\Omega \ a_{11} >  0.$
\end{itemize} 
We recall that $\elle$ is said hypoelliptic if every distribution $u$ in $\Omega$ such that $\elle u \in C^\infty 
(\Omega)$ is a smooth function. 
We note that condition (H2) ensures that  for every  $x \in \Omega$ there exists $\xi \in \erre^n$ such that $\langle 
A(x) \xi, \xi \rangle > 0$ that is $\elle$ is non-totally degenerate, in accordance with Definition 
5.1 in \cite{bony_1969}.  We can drop condition (H2) if the operator $\widetilde\elle = \partial_{x_{n+1}}^2 + \elle$ acting on $\erre^{n+1}$   satisfies (H1) (see Corollary \ref{corollariosergio}). 

The main result of this paper is the following Harnack inequality for the non-negative solutions of the equation
$\elle u=0$. It obviously applies to the Laplacian and to the heat operators and in these cases it restores the classical elliptic and parabolic Harnack inequalities. 
\begin{theorem}\label{maintheorem} Assume that $\elle$ statisfies (H1) and (H2).  Let $x_0$ in $\Omega$ and let $K$ be any compact set  contained in the interior 
of $\overline {\picorsivo(x_0,\Omega)}$, then there exists a positive constant $C=C(x_0, K, \Omega,\elle)$ such that 
$$
  \sup_K u \le C u(x_0), 
$$
for every non-negative solution $u$ of $\elle u=0$ in $\Omega$.
\end{theorem}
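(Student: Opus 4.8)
The plan is to deduce the Harnack inequality from potential-theoretic properties of the sheaf of $\elle$-harmonic functions, following the Bony/Brelot axiomatic approach. First I would check that, under (H1) and (H2), the operator $\elle$ defines a \emph{Brelot harmonic space} on $\Omega$: hypoellipticity of $\elle-\beta$ and of the formal adjoint $\elle^*$, together with non-total degeneracy from (H2), guarantees the existence of a basis of regular sets for the Dirichlet problem, the Bauer/Brelot convergence property (Harnack-type monotone convergence), and positivity/separation of points; these are exactly the hypotheses under which Bony's maximum principle and the construction of Green functions in \cite{bony_1969} apply. In particular, for every $x\in\Omega$ there is at least one positive $\elle$-harmonic function near $x$, so the constant function $1$ is (locally) a potential-theoretic reference.

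The core of the argument is a \emph{propagation} or \emph{connectedness} lemma for the set where a nonnegative solution can vanish. Define, as in the statement, the $\elle$-propagation set $\picorsivo(x_0,\Omega)$ to be the set of points reachable from $x_0$ by the natural propagation mechanism of $\elle$ (attainability through the sub-unit/drift dynamics, or equivalently via a chain of ``good'' open sets on which a strong minimum principle forces a solution vanishing at one interior point to vanish identically). The key step is: if $u\ge 0$, $\elle u=0$, and $u(x_0)=0$, then $u\equiv 0$ on $\picorsivo(x_0,\Omega)$, hence on its closure by continuity, hence on the interior of $\overline{\picorsivo(x_0,\Omega)}$. This is the qualitative (weak) Harnack principle; it rests on the strong minimum principle for hypoelliptic $\elle$ (a nonnegative supersolution attaining an interior minimum value $0$ vanishes on the propagation set of that point), which in turn follows from Bony's version of the Hopf/Oleinik lemma and the hypoellipticity hypothesis.

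To pass from the qualitative statement to the quantitative bound $\sup_K u\le C\,u(x_0)$, I would run a standard functional-analytic/compactness argument. Fix a compact $K$ in the interior of $\overline{\picorsivo(x_0,\Omega)}$ and fix a slightly larger compact set and a relatively compact open $\Omega'$ with $K\cup\{x_0\}\Subset\Omega'\Subset\Omega$ chosen inside the propagation set. Consider the cone $\mathcal{C}=\{u\in C(\overline{\Omega'}): \elle u=0 \text{ in }\Omega',\ u\ge 0,\ u(x_0)=1\}$. By interior Schauder/hypoelliptic a priori estimates, $\mathcal C$ is locally equibounded and equicontinuous (one first needs a local, non-sharp Harnack bound near $x_0$ from the existence of a Green function / Harnack chain through relatively compact balls, which is classical in the elliptic-parabolic hypoelliptic setting), hence relatively compact in $C_{loc}(\Omega')$; and it is closed. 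If $\sup_K$ over $\mathcal C$ were not finite, extract a sequence $u_k\in\mathcal C$ with $\sup_K u_k\to\infty$; normalizing $v_k=u_k/\|u_k\|_{L^\infty(K)}$ gives, up to a subsequence, a limit $v\ge 0$, $\elle v=0$, with $v(x_0)=0$ but $\|v\|_{L^\infty(K)}=1$ — contradicting the qualitative principle, since $x_0$ vanishing forces $v\equiv 0$ on a neighborhood of $K$. This yields the finite constant $C=C(x_0,K,\Omega,\elle)$.

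The main obstacle I anticipate is making precise, and then exploiting, the $\elle$-propagation set $\picorsivo(x_0,\Omega)$: one must show that its interior is exactly the set through which a local, chainable Harnack inequality can be propagated, i.e. that every point of the interior of $\overline{\picorsivo(x_0,\Omega)}$ is joined to $x_0$ by a finite chain of relatively compact open sets on each of which the (already known) local Harnack inequality holds, with controlled constants. Equivalently, the subtle point is the compatibility between the \emph{attainability/propagation} description of $\picorsivo$ (which is how one proves the strong minimum principle, via Bony's propagation of support/minimum for hypoelliptic operators) and its \emph{potential-theoretic} description (sets where solutions cannot be separated). Once this identification and the attendant Harnack chain are in hand, the compactness step above is routine; the local (non-global) Harnack inequality in relatively compact subsets is standard for operators satisfying (H1)–(H2) and I would cite it rather than reprove it.
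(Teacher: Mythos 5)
Your overall skeleton (verify the potential--theoretic axioms, use the Amano/Bony propagation of minima to show that a nonnegative solution vanishing at $x_0$ vanishes on $\picorsivo(x_0,\Omega)$) matches the qualitative half of the paper's argument. But the step that turns this into the quantitative bound $\sup_K u\le C\,u(x_0)$ is where your proposal has a genuine gap, and it is circular. In your compactness argument you normalize $v_k=u_k/\|u_k\|_{L^\infty(K)}$ and want to extract a subsequence converging in $C_{loc}$; for that you need the family $\{v_k\}$ to be equibounded on an \emph{open neighborhood} of $K\cup\{x_0\}$, not just on $K$. Hypoelliptic interior estimates convert an $L^\infty$ (or $L^1_{loc}$) bound on an open set into bounds on all derivatives on compact subsets, but they cannot produce the initial open-set bound from a bound on a compact set or at a single point. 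For nonnegative solutions, passing from ``bounded at a point/on a compact set'' to ``bounded on a neighborhood'' \emph{is} the Harnack inequality. Your fallback --- a ``local, non-sharp Harnack bound near $x_0$'' on relatively compact balls, to be chained through the propagation set --- is not available here and is in fact false in general: for operators such as $\partial_{x_1}^2+x_1\partial_{x_2}$ or the heat operator itself, no two-sided local Harnack inequality on Euclidean balls holds (the inequality is intrinsically one-directional along the drift $Y$), and establishing the correct directional version is precisely the content of the theorem. So both the equiboundedness step and the Harnack-chain step presuppose what is to be proved.

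The paper avoids this entirely: once the sheaf $\acca(V)=\{u\in C^\infty(V):\elle u=0\}$ is shown to be a Doob harmonic space (the only nontrivial new point being the separation axiom, verified with the explicit supersolutions $M-e^{\lambda x_1}$ and $|x-y|^2+M-e^{\lambda x_1}$), the quantitative estimate comes for free from the abstract Harnack inequality of Constantinescu--Cornea (Proposition 6.1.5), which holds on the interior of the smallest absorbent set $\Omega_{x_0}$ and whose proof rests on the Doob convergence axiom rather than on compactness of normalized families. The remaining work is Lemma \ref{lemma2}, i.e.\ $\picorsivo(x_0,\Omega)\subseteq\Omega_{x_0}$, proved with Amano's minimum propagation principle applied to a suitable $h_\varphi^V$. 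If you want to keep your normal-families strategy, you must first import the abstract inequality (or an equivalent local directional Harnack estimate) to get equiboundedness --- at which point the compactness argument becomes redundant.
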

We introduce here the definition of $\elle$-\emph{propagation set} $\picorsivo(x_0,\Omega)$ appearing it the above statement. 
It is the set of all points $x$ reachable from  $x_0$  by a {\it propagation path}:
$$\picorsivo(x_0,\Omega) : = \{ x \in \Omega\, |\, \exists\   \gamma\  \mbox{$\elle$-propagation 
path}, \gamma(0)=x_0, \gamma(T)=x \}. $$

A $\elle$-{\it propagation path} is any absolutely continuous path $\gamma: [0,T] \longrightarrow \Omega$
such that 
\[\gamma '(t)= \sum_{j=1}^n \lambda_j(t) X_j(\gamma(t))+\mu(t)Y(\gamma(t)) \quad\quad
\mbox{ a.e. in } [0,T]\] 
for suitable piecewise constant real functions  $\lambda_1,\ldots,\lambda_n,$ and $\mu$, $\mu\geq 0$. \\ $X_1(x), 
\ldots, X_n(x), Y(x)$ are the vector fields defined in the following way:
\begin{equation} \label{eq-XY}
 X_j(x) := \sum_{i=1}^n a_{ji} (x) \partial_{x_i},\quad j=1, \dots, n, \qquad
 Y(x) := \sum_{i=1}^n b_i(x) \partial_{x_i}.
\end{equation}

As we said at the beginning of the Introduction, our main result can holds also under somehow weaker assumptions on $\elle$.
Only in the following Corollary the hypotheses (H1) and (H2) on $\elle$ are replaced by the assumption that the operator $\widetilde\elle = \partial_{x_{n+1}}^2 + \elle$ in $\erre^{n+1}$ satisfies (H1). Of course if $\widetilde\elle$ satisfies assumption  (H1) then also $\elle$ does.
A simple example of operator satisfying the hypotheses of this Corollary but not the ones of Theorem 1 is $\partial_{x_1}={x_1}^2\partial^2_{x_2}$ in $\erre^2$. 
\begin{corollary} \label{corollariosergio}
If   the operator  $\widetilde\elle = \partial_{x_{n+1}}^2 + \elle$ acting on   $\Omega \times \erre$   satisfies (H1)
then for every $x_0$ in $\Omega$ and for every compact set $K$ contained in the
interior of the $\elle$-propagation set $\overline {\picorsivo(x_0,\Omega)}$,  there exists a positive constant $C=C(x_0, K, \Omega,\elle)$ such
that
$$
 \sup_K u \le C u(x_0),
$$
for every non-negative solution $u$ of $\elle u=0$ in $\Omega$.
\end{corollary}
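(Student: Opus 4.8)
\emph{Proof proposal.} The plan is to reduce the Corollary to Theorem~\ref{maintheorem} applied to the augmented operator $\widetilde\elle=\partial_{x_{n+1}}^2+\elle$, whose extra second–order term supplies precisely the non–degeneracy that $\elle$ may lack. Fix $x_0\in\Omega$ and a compact set $K\subset \mathrm{int}\,\overline{\picorsivo(x_0,\Omega)}$. I would work on the \emph{bounded} open set $\widetilde\Omega:=\Omega\times(-1,1)\subset\erren\times\erre$ and put $\widetilde x_0:=(x_0,0)$. Since hypoellipticity is a local property, $\widetilde\elle$ satisfies (H1) on $\widetilde\Omega$ by hypothesis; and the coefficient matrix of $\widetilde\elle$ is the block matrix $\mathrm{diag}(A,1)$, so $\langle\widetilde A(\widetilde x)\xi,\xi\rangle\ge\xi_{n+1}^2$ and $\widetilde\elle$ is non–totally degenerate (if one insists on the literal form of (H2), interchange the coordinates $x_1$ and $x_{n+1}$, which affects neither hypoellipticity nor the solutions of the equation). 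Hence Theorem~\ref{maintheorem} is applicable to $\widetilde\elle$ on $\widetilde\Omega$.

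The core of the argument is to identify the propagation set of $\widetilde\elle$. By \eqref{eq-XY} the vector fields of $\widetilde\elle$ are $\widetilde X_j=X_j$ for $j=1,\dots,n$ (they have no $\partial_{x_{n+1}}$–component, since the corresponding matrix entries vanish), $\widetilde X_{n+1}=\partial_{x_{n+1}}$, and $\widetilde Y=Y$. Therefore, along any $\widetilde\elle$–propagation path $\widetilde\gamma=(\gamma,\gamma_{n+1})$ in $\widetilde\Omega$, the first $n$ components satisfy $\gamma'(t)=\sum_{j=1}^n\lambda_j(t)X_j(\gamma(t))+\mu(t)Y(\gamma(t))$ with $\mu\ge0$ and piecewise constant controls, so $\gamma$ is an $\elle$–propagation path in $\Omega$; conversely, any $\elle$–propagation path in $\Omega$ can be coupled with an arbitrary (sign–unrestricted) free motion in the $x_{n+1}$–variable to produce an $\widetilde\elle$–propagation path in $\widetilde\Omega$, and concatenating such paths shows every $(x,s)$ with $x\in\picorsivo(x_0,\Omega)$ and $s\in(-1,1)$ is reachable from $\widetilde x_0$. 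Since the two blocks of coordinates decouple, this gives
\[
  \picorsivo(\widetilde x_0,\widetilde\Omega)=\picorsivo(x_0,\Omega)\times(-1,1),
\]
whence $\overline{\picorsivo(\widetilde x_0,\widetilde\Omega)}=\overline{\picorsivo(x_0,\Omega)}\times[-1,1]$ and consequently $\mathrm{int}\,\overline{\picorsivo(\widetilde x_0,\widetilde\Omega)}=\big(\mathrm{int}\,\overline{\picorsivo(x_0,\Omega)}\big)\times(-1,1)$. In particular $\widetilde K:=K\times\{0\}$ is a compact subset of the interior of $\overline{\picorsivo(\widetilde x_0,\widetilde\Omega)}$.

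Finally, given a non-negative solution $u$ of $\elle u=0$ in $\Omega$ (smooth by (H1)), set $\widetilde u(x,x_{n+1}):=u(x)$ on $\widetilde\Omega$. Then $\widetilde u\ge0$ and $\widetilde\elle\widetilde u=\partial_{x_{n+1}}^2 u+\elle u=0$, so Theorem~\ref{maintheorem} applied to $\widetilde\elle,\widetilde\Omega,\widetilde x_0,\widetilde K$ yields a constant $\widetilde C=\widetilde C(\widetilde x_0,\widetilde K,\widetilde\Omega,\widetilde\elle)$ with $\sup_{\widetilde K}\widetilde u\le\widetilde C\,\widetilde u(\widetilde x_0)$. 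Since $\sup_{\widetilde K}\widetilde u=\sup_K u$ and $\widetilde u(\widetilde x_0)=u(x_0)$, the Corollary follows with $C:=\widetilde C$, which depends only on $x_0,K,\Omega,\elle$ once the interval $(-1,1)$ is fixed. I expect the only genuinely delicate point to be the displayed identity for the propagation sets: one has to verify that piecewise constant controls on $\widetilde\Omega$ project to, and lift from, piecewise constant controls on $\Omega$, and that all paths involved remain inside the respective open sets — both being straightforward consequences of the product structure of $\widetilde\Omega$ and the block form of $\widetilde A$.
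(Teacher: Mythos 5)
Your proposal is correct and follows essentially the same route as the paper: lift to $\widetilde\Omega=\Omega\times\,]-1,1[$, identify $\picorsivo((x_0,0),\widetilde\Omega)=\picorsivo(x_0,\Omega)\times\,]-1,1[$, and apply Theorem~\ref{maintheorem} to $\widetilde u(x,x_{n+1})=u(x)$. You are in fact somewhat more careful than the paper on two points it leaves implicit — why $\widetilde\elle$ satisfies (H2) (via the unit entry $\widetilde a_{n+1,n+1}=1$ and a coordinate relabelling) and why the propagation sets decouple as a product — and your choice $\widetilde K=K\times\{0\}$ in place of the paper's $K\times[-\frac12,\frac12]$ is an immaterial variation.
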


The notion of propagation set $\picorsivo(x_0,\Omega)$ has been introduced by Amano in his work on maximum principle (see 
\cite[Theorem 2]{amano_max_1979}). In our case it reads as follows. 

{\it Assume that $u$ is a (smooth) solution of $\elle u=0$ in $\Omega.$ If $u$ attains its maximum at a point $x_0$ in 
$\Omega$, then $u\equiv u(x_0)$ in $\overline{\picorsivo(x_0,\Omega)}.$}

The Amano maximum principle is a crucial tool to prove Theorem \ref{maintheorem} using the {abstract Harnack inequality} 
of the Potential Theory (\cite[Proposition 6.1.5]{CC}). We observe that Harnack inequalities based on results of Potential Theory 
were proved in  \cite[Theorem 4.2]{bonfiglioli_lanconelli_uguzzoni_uniform}  for heat equations on Carnot groups and in 
\cite[Theorem 1.1]{cinti_nystrom_polidoro} and in \cite[Theorem 5.2]{cinti_menozzi_polidoro} for more general evolution equations. 
The class of hypoelliptic operators considered in  \cite{cinti_nystrom_polidoro,cinti_menozzi_polidoro} is 
\begin{equation} \label{eq-evol}
  \sum_{j=1}^m \widetilde X^2_j(y)   +\widetilde X_0(y) - \partial_t, 
\end{equation}
where $\widetilde X_j$ are smooth vector fields on $\erre^N$ and $(y,t)$ denoted the point of  any subset of 
$\erre^{N+1}$. We explicitly note that operator \eqref{eq-evol} is a particular example of the operators 
\eqref{operatore}, with respect to the variable $x =(y,t)$.  In both papers  the operators are assumed left translation 
invariant  w.r.t. a Lie group in $\erre^{N+1}$ and endowed with a {\it global} fundamental solution.  We point out that 
the use of the fundamental solution is a key step in verifying the {\it separation axiom} of the axiomatic Potential 
Theory. 

The approach used in this note allows us to prove the validity of the separation axiom in Section \ref{axioms} without
requiring  the existence of any global fundamental solution on every bounded open set.  We rely  only on  
hypoellipticity,  on non total degeneracy of 
$\elle$ and the following maximum principle due to Picone that we recall for the sake of completeness.

{\it     Let  $V$ be any open (bounded) subset of  $\Omega$. Assume that exists a function  $w: V \rightarrow \erre$ such that  $\elle w<0$ in  $V$ and $\inf_{V} w>0.$  
Then for every  $u\in C^2 (V)$ such that \[\elle u\geq 0\quad\mbox { in } 
V,\quad\limsup_{x\tende \xi}u(x) \le 0\quad \forall \xi \in\partial V, \]  we have $u\le 0$ in  $V$.}

In our case the existence of a function $w$ follows from (H2) and from the smoothness of the coefficients. Indeed, under these assumptions, we can choose two  positive real constants $M$ and 
$\lambda$ such that  the function \begin{equation}\label{eqw} w(x)=w(x_1,\ldots, x_N)= M - e^{\lambda x_1}\end{equation} has the required properties.

This paper is organized as follows. In Section 2  all the notions and results from  Potential Theory that we need are 
briefly recalled.
In Section 3 we show that the set of the solutions $u$ of $\elle u=0$ in $\Omega$  satisfies the axioms of the Doob  Potential Theory.
In Section 4 we prove that the $\elle$-propagation set of $x_0$  is a subset of the smallest absorbent set 
containing  $x_0$. In this way we derive the Harnack inequality for the non-negative solutions $u$ of $\elle u =0$. In Section 5  the propagation sets of some meaningful operators are studied. 
 In particular we focus on the following operators:
$\partial_{{x}_1}^2 + x_1 \partial_{x_2}$ in $\erre^2$ and $\partial_{x_1}^2 + \sin(x_1) \partial_{x_2} + \cos(x_1) \partial_{x_3}$ in $\erre^3$, and we show that  the geometry of the relevant Harnack inequality may appear either  of \emph{parabolic} or \emph{elliptic}-type, depending on the choice of $\Omega$, even if both operators are \emph{parabolic}.

\section{Some recalls from Potential Theory}

We recall some definitions and results of the Potential Theory that we need to prove our Harnack inequality.
For a detailed description of the general theory of {\it harmonic spaces} we refer to \cite[chapter 6]{BLU}, \cite{CC} and to \cite{bauer}.

\subsection{Sheafs of functions and harmonic sheafs in $\Omega$} 
\mbox{}\\
Let $V$ be any open subset of $\Omega$. We denote by $\overline\erre$ the set $\mbox{$\erre \cup \{\infty, -\infty\}$ 
and by $\overline\erre^V$}$ the set of functions $\mbox{$u: V \longrightarrow \overline\erre$}$.  Moreover $C(V,\erre)$  is the vector space of real continuous functions defined on $V$.

A map 
$$\effe : V \mapsto \effe(V) \subseteq \overline\erre^V$$ 
is a {\it sheaf of functions} in $\Omega$ if
\begin{itemize}

\item[$(i)$] $V_1, V_2 \subseteq \Omega$, $V_1 \subseteq V_2$, $ u\in \effe(V_2)$ $\implies$ $u|_{V_1} \in 
\effe({V_1})$;

\item[$(ii)$] $V_\alpha  \subseteq \Omega\  \forall \alpha \in \mathcal{A},  u: \bigcup_{\alpha \in A} V_\alpha 
\longrightarrow \overline\erre$, $u|_{V_\alpha} \in \effe({V_\alpha}) \implies u\in \effe({ \bigcup_{\alpha\in 
\mathcal{A}}}V_\alpha).$ 

\end{itemize}

\noindent When $\effe(V)$ is a linear subspace of $C(V,\erre)$ for every $V\subseteq\Omega$, we say that the sheaf of 
functions $\effe$ on $V$ is {\it harmonic} and we denote it $\acca(\Omega).$
\subsection{Regular open sets, harmonic measures and absorbent sets} 
\mbox{}\\
Let $\acca$ be a harmonic sheaf on $\Omega$. We say that an open set $V\subseteq \Omega$ is
{\it regular}
if:
\begin{itemize}

\item[$(i)$] $\overline{V}\subseteq \Omega$ is compact and $\partial V \neq \emptyset$;

\item[$(ii)$] for every continuous function $\varphi : \partial V \longrightarrow \erre$, there exists a unique 
function 
in $\acca(V)$, that we denote by $h_\varphi^V$, such   that 
$h_\varphi^V(x) \xrightarrow[x\tende \xi ]{} \varphi (\xi)$ for every $\xi \in \partial V;$
\item[$(iii)$] if $\varphi\geq 0$ then $h_\varphi^V \geq 0.$ 

\end{itemize}
From $(ii)$ and $(iii)$ it follows that, for every  regular set $V$ and for every $x \in V$, the map 
$$C(\partial V) \ni \varphi \longmapsto h_\varphi^V(x) \in \erre$$ is linear and positive.
Thus, the Riesz representation theorem (see e.g. \cite{rudin}), 
implies that, for every  regular set $V$ and for every $x \in V$, there exists a {\it regular Borel measure}, that we 
denote by $\mu^V_x$, supported in $\partial V$, 
such that 
$$h_\varphi^V (x) = \int_{\partial V} \varphi(y) \ d \mu^V_x(y) \qquad \forall\ \varphi\in C(\partial V).$$ 
The measure $\mu^V_x$ is called the {\it harmonic measure} related to $V$ and $x$.

Now, let $A$ be a closed subset of $\Omega$ . We say that $A$ is {\it absorbent} if  it contains the supports of all the 
harmonic measures  related to its points.  More precisely, $$\mbox{for every $x \in A$ and every regular set $V$ containing $x$, 
$\mathrm{supp\ } \mu_{x}^V\subseteq A$.}$$

If $x_0 \in\Omega $, we define $\Omega_{x_0}$ as the smallest absorbent set containing  $x_0$:

$$\Omega_{x_0}:=  \bigcap_{\substack {A \mbox{\tiny\  absorbent} \\ A\ni x_0}} A.$$

\subsection{Superharmonic functions} \mbox{}\\
A function $u: \Omega \ttende ]-\infty, \infty]$ is called {\it superharmonic in $\Omega$} if 
 \begin{itemize}

\item[$(i)$] $u$ is lower semi-continuous;

\item[$(ii)$] for every regular set $V$, $\overline V\subseteq \Omega$, and  for every $\varphi \in C( \partial V, 
\erre)$, 
$\varphi \le u|_{\partial \Omega} $,  it follows  $ u \geq h_\varphi^V$ in $V;$

\item[$(iii)$] the set $\{ x\in \Omega \ | \ u(x) < \infty \}$ is dense in $\Omega$.

\end{itemize}
We denote by $\esse(\Omega)$ the family  of the superharmonic functions on $\Omega$.

By the maximum principle, we have that every function $u\in C^2(\Omega)$ such that $\elle u \le 0$ in $\Omega$ is 
superharmonic (see \cite[Proposition 7.2.5]{BLU}).

\subsection{Doob harmonic spaces and Harnack inequality} \label{axioms}
\mbox{}\\
We say that a harmonic sheaf $\acca(\Omega)$ is a {\it Doob harmonic space} if the following axioms are satisfied.

 \begin{itemize}

\item[(A1)]  { \it Positivity axiom:}\\
For every $x\in \Omega$, there exists an open set $V\ni z$ and a function $u\in \acca(V)$ such that $u(x)>0$.

\item[(A2)]  { \it Doob convergence axiom:}\\
Let $(u_n)_{n\in\enne}$ be a monotone increasing sequence in $\acca(\Omega)$ and let  \\ $u:= \sup_{n\in\enne} u_n.$ If  
 the set $\{ x\in \Omega \ | \ u(x) < \infty \}$ is dense in $\Omega$, then $u\in \acca(\Omega).$

\item[(A3)]  { \it Regularity axiom:}\\   There is a basis of the euclidean topology of $\Omega$ formed by  regular 
sets.

\item[(A4)]  { \it Separation axiom:}\\  $\esse(\Omega)$ separates the points of $\Omega$ in this sense:
for every $y$ and $z$ in $\Omega$, $y\neq z$, there exist two non-negative  functions $u$ and $v$ in $\esse(\Omega)$ such that 
$u(y)v(z)\neq u(z)v(y).$
\end{itemize}

We close the section recalling that in this setting the {\it abstract Harnack inequality} from the Parabolic Potential 
Theory holds \cite[Proposition 6.1.5
]{CC}.
\begingroup
\setcounter{tmp}{\value{theorem}}
\setcounter{theorem}{0} 
\renewcommand\thetheorem{\Alph{theorem}}
\begin{theorem} \label{abstract_harnack} Let  $(\Omega, \acca)$ be a {\it Doob harmonic space}, $x_0\in \Omega$ and let 
$K$ be a compact set contained in the interior of $\Omega_{x_0}$,  the smallest absorbent set containing $x_0$.
Then there exists a positive constant $C=C(x_0, K, \Omega)$ such that 
$$\sup_K u \le C u(x_0)\qquad\forall u\in \acca(\Omega), u\geq0.$$
\end{theorem}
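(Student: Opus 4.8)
The plan is to argue through the \emph{Harnack function}
\[
\Phi(x,y):=\sup\bigl\{u(x)\ :\ u\in\acca(\Omega),\ u\ge 0,\ u(y)\le 1\bigr\}\in[0,+\infty],
\]
whose only elementary property I shall use is the multiplicative triangle inequality $\Phi(x,z)\le\Phi(x,y)\,\Phi(y,z)$, obtained by rescaling any competitor for $\Phi(x,z)$ by the factor $\Phi(y,z)$. The whole statement then reduces to the assertion that $\Phi(\cdot,x_0)$ is finite and locally bounded on the interior of $\Omega_{x_0}$. Indeed, if $u\ge 0$ is harmonic and $u(x_0)>0$, then $u/u(x_0)$ is admissible in the definition of $\Phi(\cdot,x_0)$, so $u(x)\le\Phi(x,x_0)\,u(x_0)$ for all $x$, and covering the compact set $K\subset\operatorname{int}\Omega_{x_0}$ by finitely many neighbourhoods on which $\Phi(\cdot,x_0)$ is bounded produces the constant $C$. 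The remaining case $u(x_0)=0$ I would settle first and separately, since it is also where the notion of absorbent set appears in its cleanest form: then $Z:=\{x\in\Omega:u(x)=0\}$ is closed (harmonic functions are continuous), and for every regular $V$ with $x\in V$ the identity $0=u(x)=\int_{\partial V}u\,d\mu^V_x$ forces $u=0$ $\mu^V_x$-a.e.\ on $\partial V$; since $Z\cap\partial V$ is closed and of full $\mu^V_x$-measure it contains $\operatorname{supp}\mu^V_x$, so $Z$ is absorbent, whence $\Omega_{x_0}\subseteq Z$ and $u\equiv 0$ on $K$.

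For the core assertion I would isolate two ingredients. The first is a \emph{local Harnack step}: a mechanism, valid near each point of $\Omega$, by which a bound of the form $u\le M\,u(x_0)$ can be carried from a point to neighbouring points lying ``downstream'' of it along the propagation. One fixes, by the regularity axiom (A3), regular sets $V$ with $\overline V\subset\Omega$, represents $u$ inside $V$ through its boundary values $u(y)=\int_{\partial V}u\,d\mu^V_y$, and compares the harmonic measures $\mu^V_y$ for nearby $y$ — which is possible because the positivity axiom (A1) furnishes a strictly positive local harmonic function to compare against, while the Doob convergence axiom (A2) controls suprema of increasing families of harmonic functions. The second ingredient is the identification, within the theory of absorbent sets, of $\operatorname{int}\Omega_{x_0}$ with the reach of this local step started from $x_0$: iterating the local step — from the trivial bound at $x_0$, then across the supports of the harmonic measures based at $x_0$, and so on — and keeping the accumulated constants under control on compacta, one obtains that $\Phi(\cdot,x_0)$ is finite and locally bounded on $\operatorname{int}\Omega_{x_0}$, minimality of $\Omega_{x_0}$ (the smallest absorbent set through $x_0$) being used exactly to guarantee that the region reached by the iteration is all of $\operatorname{int}\Omega_{x_0}$.

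I expect the genuine obstacle to be precisely this interplay of the local step with the iteration, and the reason it is delicate is intrinsic to the Doob (parabolic) setting: unlike in the Brelot (elliptic) case, $\Phi$ is \emph{not} bounded near the diagonal — $\Phi(x,y)$ is generically infinite when $x$ lies against the direction of propagation from $y$ — so the comparison of harmonic measures in the local step and the chaining back to $x_0$ cannot be carried out symmetrically along connected paths, but must track the one-sided propagation, which is exactly the role played by absorbent sets. A complete argument along these lines is \cite[Proposition 6.1.5]{CC} (see also \cite{bauer}), and in the paper I would invoke it directly. Finally, the separation axiom (A4) is not used for this particular inequality; in the concrete situation at hand it will be verified separately via Picone's maximum principle.
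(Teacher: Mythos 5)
Your proposal is correct and takes essentially the same route as the paper: Theorem~\ref{abstract_harnack} is stated there as a recalled result and its proof is simply the citation of \cite[Proposition 6.1.5]{CC}, which is exactly what you ultimately invoke. Your preliminary reductions (the multiplicative Harnack function, and the observation that the zero set of a nonnegative harmonic function is absorbent, settling the case $u(x_0)=0$) are sound and consistent with the argument in \cite{CC}.
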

\endgroup

\section{The harmonic space of the solutions of  $\elle u=0$ }
We show that the set of the solutions of the equation $\elle u=0$ is a Doob harmonic space in $\Omega.$ 
For every $V\subseteq \Omega$ we consider the harmonic sheaf 
$$ \erren \supseteq V \frecciaf \acca(V)$$

where 
\begin{equation*} \acca(V)= \{ u\in C^\infty(V) \ | \ \elle u=0 \}
\end{equation*}
and  $\elle$ is the operator  \eqref{operatore}.

The {\it positivity axiom } (A1) is plainly verified. Indeed every constant function belongs to $\acca(\Omega)$.

(A2) is a consequence of a weak Harnack inequality  due to Bony (see \cite[Theoreme 7.1]{bony_1969}); see also 
\cite[Proposition 7.4]{kogoj_lanconelli_2004}).

 (A3), i.e. the existence of a basis of the euclidean topology of $\Omega$ formed by  regular 
sets,  can be proved as in  \cite[Corollarie 5.2]{bony_1969}, see also \cite[Proposition 7.1.5]{BLU}.  We stress that the tools used in its 
proof are only the hypoellipticity, the non totally degeneracy of the operator $\elle$  and the classical 
Picone  Maximum Principle.

Now we are left to verify the {\it separation axiom } (A4).  As in our setting the constant are superharmonic functions, we  need to prove that
\begin{equation}\label{separazione} \forall\ y, z \in \Omega, y\neq z,  \exists\  u \in \esse(\Omega),\ u\geq 0,  \mbox{ 
such that } u(y)\neq u(z).\end{equation}

Now, let $y=(y_1,\ldots, y_n)$ and $z=(z_1,\ldots, z_n)$  be two different points in $\Omega$.

We observe that the  function $ w(x)=w(x_1,\ldots, x_N)= M- e^{\lambda x_1}$, as in \eqref{eqw}, for suitable real positive constants $\lambda$ and $M$,  is non-negative and  $\elle w(x) <0$  for every $x \in \Omega$, hence $w\in \esse(\Omega)$.

If  $y_1\neq z_1$, we can choose $u(x)=w(x)$ to separate $y$ and $z$  and we are done.

If  $y_1= z_1$, we set  $u(x)=  |x-y|^2 + w(x)$. Also in this case, for suitable $\lambda$ and $M$, $u$ is non-negative, $u \in C^2(\Omega)$ and $\elle u(x) = \elle (|x-y|^2) + \elle(w(x))< 0$  in  $\Omega$. Moreover $u(y) - u(z) = |z-y|^2$, so \eqref{separazione} is  satisfied.

\section{Propagation sets and Harnack inequality}

Let $X_1(x), \ldots, X_n(x), Y(x)$ be the vector fields defined in the following way:
\begin{equation*}\begin{split}  &
 X_i(x)= \sum_{j=1}^n a_{ij}(x)  \partial_{x_j},\qquad 1 \le i \le n,
\\ & Y(x) = \sum_{i=1}^n  b_i(x) 
\partial_{x_i}.
\end{split}
\end{equation*} 

We recall  that a $\elle$-{\it propagation path}   is any absolutely continuous path $\gamma: [0,T] \longrightarrow \Omega$
such that 
\[\gamma '(t)= \sum_{j=1}^n \lambda_j(t) X_j(\gamma(t))+\mu(t)Y(\gamma(t)) \quad\quad
\mbox{ a.e. in } [0,T]\] for suitable piecewise constant
real functions  $\lambda_1,\ldots,\lambda_n,$ and $\mu$ with $\mu\geq 0$.

For a point $x_0$ in $\Omega$, we define the $\elle$-{\it propagation set} as the set of all points $x$ such that $x$ and 
$x_0$ can be connected by a propagation path, running from $x_0$ to $x$:

$$\picorsivo(x_0,\Omega) : = \{ x \in \Omega\, |\, \exists\   \gamma: [0,T] \tende  \Omega, \gamma\  \mbox{$\elle$-propagation 
path}, \gamma(0)=x_0, \gamma(T)= x  \}. $$
Proceeding as in \cite[Lemma 5.8]{cinti_menozzi_polidoro}, we prove  now that the $\elle$-propagation set of $x_0$ is a 
subset of every absorbent set containing $x_0.$ This Lemma, based on  the maximum propagation principle, is a key lemma 
in order to get our Harnack inequality so 
we prefer to give here its detailed proof. 
\begin{lemma} \label{lemma2} For every $x_0$ in $\Omega$, $\picorsivo(x_0,\Omega) \subseteq \Omega_{x_0}.$
\end{lemma}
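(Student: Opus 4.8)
The plan is to show that $\picorsivo(x_0,\Omega)$ is contained in every closed absorbent set $A$ containing $x_0$; since $\Omega_{x_0}$ is the intersection of all such $A$, this gives the claim. Fix a closed absorbent set $A \ni x_0$ and a propagation path $\gamma : [0,T] \to \Omega$ with $\gamma(0) = x_0$; we must prove $\gamma(t) \in A$ for all $t \in [0,T]$. Since $A$ is closed and $\gamma$ is continuous, it suffices to prove that $\{ t \in [0,T] : \gamma(t) \in A \}$ is both closed and open in $[0,T]$ — closedness is immediate, so the whole content is the local statement: \emph{if $\gamma(t_0) \in A$ and $\gamma$ is an integral curve of a single vector field of the form $\pm X_j$ or $\mu Y$ with $\mu \geq 0$ on a neighbourhood of $t_0$, then $\gamma(t) \in A$ for $t$ near $t_0$.} Because $\lambda_1,\dots,\lambda_n,\mu$ are piecewise constant, on each subinterval of constancy $\gamma$ is (a reparametrisation of) the integral curve of the fixed vector field $Z := \sum_j \lambda_j X_j + \mu Y$; absorbing the constants, it is enough to handle a fixed vector field of the form $Z = \sum_j \lambda_j X_j + \mu Y$ with $\mu \ge 0$, i.e. to show that the forward integral curve of such a $Z$ starting in $A$ stays in $A$ for small times.

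The key tool is the \emph{maximum propagation principle} of Amano/Bony, restated in the introduction: if $u$ is a solution of $\elle u = 0$ on $\Omega$ attaining its maximum at a point $p$, then $u \equiv u(p)$ on $\overline{\picorsivo(p,\Omega)}$. I would combine this with the structure of superharmonic functions and harmonic measures as follows. Suppose $p := \gamma(t_0) \in A$ and let $q := \gamma(t_1)$ for $t_1 > t_0$ close to $t_0$, lying on the integral curve of the relevant $Z$. Assume for contradiction $q \notin A$. Pick a regular neighbourhood $V$ of $p$ (axiom (A3)) small enough that $q \in V$ but, shrinking if necessary, $\partial V$ still meets the complement of $A$; more to the point, since $A$ is absorbent, $\operatorname{supp}\mu^V_p \subseteq A$. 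One then wants to produce a nonnegative continuous $\varphi$ on $\partial V$ that vanishes on $\operatorname{supp}\mu^V_p$ (hence $h^V_\varphi(p) = \int \varphi \, d\mu^V_p = 0$) but such that the solution $h^V_\varphi$ is strictly positive at $q$ — contradicting the propagation principle applied to $-h^V_\varphi$ (or, more directly, contradicting that the maximum value $0$ of $-h^V_\varphi$ at an interior-limit sense must propagate along $\gamma$). To make this clean I would instead argue on $\Omega$ with a global superharmonic function: for any nonnegative $u \in \esse(\Omega)$ with $u(p) = 0$, lower semicontinuity plus the superharmonicity inequality forces, via the harmonic measures and the abstract theory, that $u$ vanishes on $\picorsivo(p,\Omega)$; since $\picorsivo(x_0,\Omega) = \picorsivo(x_0, \cdot)$ is "transitive" along the path ($\picorsivo(\gamma(t_0),\Omega) \supseteq$ the tail $\gamma([t_0,T])$), one propagates membership in $A$ down the curve.

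Concretely, following \cite[Lemma 5.8]{cinti_menozzi_polidoro}, the cleanest route is: (1) reduce as above to a fixed vector field $Z$ and a short integral curve from $p \in A$ to $q$; (2) choose a regular set $V$ with $p \in V$, $\overline V \subseteq \Omega$, and $q \in V$, whose existence is guaranteed by (A3) and can be taken with diameter as small as we like; (3) use that $A$ absorbent gives $\operatorname{supp}\mu^V_p \subseteq A$, so any $\varphi \in C(\partial V)$, $\varphi \ge 0$, vanishing on $A \cap \partial V$ satisfies $h^V_\varphi(p) = 0$; (4) the function $h^V_\varphi$ solves $\elle h^V_\varphi = 0$ in $V$, is $\ge 0$, and attains its minimum value $0$ at the interior point $p$, so by the maximum propagation principle (applied to the solution $-h^V_\varphi$ of $\elle(-h^V_\varphi)=0$, whose max is at $p$) we get $h^V_\varphi \equiv 0$ on $\overline{\picorsivo(p,V)}$, in particular at $q$ provided $q \in \picorsivo(p,V)$; (5) but $q \in \picorsivo(p,V)$ precisely because $q$ was reached from $p$ by an integral curve of $Z$ staying in $V$ (shrink the time-step so the whole subarc lies in $V$). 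Hence $h^V_\varphi(q) = 0$ for every such $\varphi$; choosing $\varphi$ to be positive at a chosen boundary point near $q$ outside $A$ would, together with positivity of the harmonic measure $\mu^V_q$ on a neighbourhood of that boundary point, force $h^V_\varphi(q) > 0$, a contradiction — unless $\operatorname{supp}\mu^V_q$ is itself forced into $A$, which is exactly what lets us conclude $q \in A$ by running the absorbent-set definition backwards.

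The main obstacle I anticipate is step (4)–(5): one must be careful that the maximum propagation principle as stated applies to the \emph{open set $V$} (it does, being a purely local statement about $\elle$), and that the propagation set $\picorsivo(p,V)$ computed inside $V$ really contains the short integral-curve arc from $p$ to $q$ — this needs the time-step small enough that the arc, together with a tube around it, fits in $V$, which is available by compactness of the arc and (A3). A secondary subtlety is the sign bookkeeping for $Y$: the propagation path only allows $\mu \ge 0$, matching the one-sided (parabolic) nature of the drift $Y$; this is automatically respected because we only ever move forward along $\gamma$, i.e. we only need $q = \gamma(t_1)$ with $t_1 \ge t_0$, never the time-reversed curve. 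Once the local propagation step is established, the global statement $\picorsivo(x_0,\Omega) \subseteq A$ follows by the connectedness argument on $[0,T]$, and intersecting over all absorbent $A \ni x_0$ yields $\picorsivo(x_0,\Omega) \subseteq \Omega_{x_0}$.
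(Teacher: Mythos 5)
Your overall strategy (reduce to a single absorbent set $A$, follow the path, use the harmonic measure of a regular set $V$ together with a boundary function $\varphi\ge 0$ vanishing on $A\cap\partial V$, and propagate the zero minimum of $h^V_\varphi$ along the path via Amano's principle) is exactly the circle of ideas in the paper's proof, and steps (1)--(4) of your concrete outline are sound. The gap is in step (5), i.e.\ in how you close the argument. Having shown $h^V_\varphi(q)=0$ for every admissible $\varphi$, you obtain $\mathrm{supp}\,\mu^V_q\subseteq A\cap\partial V$, and you then claim $q\in A$ ``by running the absorbent-set definition backwards.'' But the definition of an absorbent set only constrains points \emph{already in} $A$ (for every $x\in A$ and every regular $V\ni x$, $\mathrm{supp}\,\mu^V_x\subseteq A$); it says nothing about a point whose harmonic measure for \emph{one particular} $V$ happens to be supported in $A$, and no such converse follows from the axioms (indeed, if $A=\{u=0\}$ for a nonnegative superharmonic $u$, the inequality $u(q)\ge\int u\,d\mu^V_q=0$ is perfectly consistent with $u(q)>0$). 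The alternative branch of your dichotomy --- forcing $h^V_\varphi(q)>0$ from ``positivity of the harmonic measure $\mu^V_q$ on a neighbourhood of that boundary point'' --- is also unjustified: for degenerate operators the harmonic measure need not charge arbitrary boundary neighbourhoods (for the heat equation in a cylinder it gives no mass to the top of the cylinder). So the local step ``$p\in A$ implies $q\in A$'' is not actually established.

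The paper avoids this by never trying to prove that an \emph{interior} point of $V$ lies in $\Omega_{x_0}$. Arguing by contradiction, it takes $t_1$ to be the \emph{last} time with $\gamma(t_1)=x_1\in\Omega_{x_0}$ (so the whole tail of the path lies outside $\Omega_{x_0}$), follows the path until it \emph{exits} the regular set $V$ at a boundary point $x_2=\gamma(t_2)\in\partial V\setminus\Omega_{x_0}$, and chooses $\varphi\ge 0$ strictly positive near $x_2$ and vanishing on the rest of $\partial V$ (in particular on $\Omega_{x_0}\cap\partial V$, so $h^V_\varphi(x_1)=0$). The contradiction then comes from the \emph{regularity of $V$}: $h^V_\varphi(\gamma(t))\to\varphi(x_2)>0$ as $t\to t_2^-$, while the minimum propagation principle forces $h^V_\varphi(\gamma(t))=0$ for $t\in\,]t_1,t_2[$. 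This uses only the boundary continuity built into the definition of a regular set, not any positivity or support property of $\mu^V_q$ at an interior point. To repair your proof you should replace your open/closed argument on $\{t:\gamma(t)\in A\}$ by this ``last exit time plus boundary contradiction'' scheme (your choice of $t_0=\sup\{t:\gamma([0,t])\subseteq A\}$ does not even guarantee that the path's exit point from $V$ lies outside $A$, which is why the paper selects the last time on $\Omega_{x_0}$ rather than the first time off it).
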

\begin{proof} By contradiction,  suppose $x \in \picorsivo(x_0,\Omega)$ and $ x \notin \Omega_{x_0}.$  
There exists an absolutely continuous path $\gamma$ connecting $x_0$ and $ x $:
$$\gamma : [0,T] \longrightarrow \Omega,\qquad \gamma(0)=x_0, \qquad \gamma(T)=x.$$
As $ \Omega_{x_0}$ is a  subset closed  in $\Omega$ and $\gamma$ is continuous, there will be a time $t_1$ such that 
$\gamma(t_1)=x_1 \in \Omega_{x_0}$ and $\gamma(t)\notin \Omega_{x_0}$ when $t$ is in $ ] t_1,T].$

Let's take a regular open set $V$ containing $x_1$. There will be $t_2 \in ] t_1,T]$ such that $x_2=\gamma(t_2) \in 
\partial V$.
From what we wrote before, $x_2$ does not belong to $\Omega_{x_0}$. 

Take  now a neighborhood  of  $x_2$, $U$  such that $U \cap \partial V \subseteq \Omega \backslash \Omega_{x_0}$ and 
consider a function $\varphi$ defined on $\partial V$ such that $\varphi$ is strictly positive in  $U \cap \partial V$ 
and $0$ otherwise. 

$$h_\varphi^V(x_1) = \int_{\partial V} \varphi( y)\ d \mu_{x_1}^V (y) = \int_{U \cap \partial V } \varphi( \zeta)\ d 
\mu_{x_1}^V (\zeta) =0,$$
because $x_1$ is in $\Omega_{x_0}$ and $\mathrm{supp\ } \mu_{x_1}^{V} \subseteq  \Omega_{x_0}$ for every regular set 
$V$.
But  $h_\varphi^V$  is nonnegative and it would attain its minimum at  $ x_1$.
From Amano minimum propagation principle \cite[Theorem 2]{amano_max_1979},  it would follow  that 
$$h_\varphi^V(\gamma(t))=0\qquad\forall \ t \in ]t_1,t_2[.$$
In conclusion, we would have that 
 $$h_\varphi^V(x) \xrightarrow[x\tende x_2 ]{} \varphi (x_2)>0,$$ and
 $$h_\varphi^V(\gamma(t)) \xrightarrow[t\tende t_2^{-} ]{} h_\varphi^V(\gamma(t_2))=0$$
that  is a contraddiction.
\end{proof}

We are now ready to give the proofs of  our main results.

\begin{proof}[Proof of Theorem \ref{maintheorem}]
Let $x_0$ in $\Omega$ and let $K$ be a compact set 
 contained in the interior of $\overline{\picorsivo(x_0,\Omega)}$. 
 As  $\Omega_{x_0}$  is a closed subset of $\Omega$, Lemma \ref{lemma2} implies that $\overline{\picorsivo(x_0,\Omega)} 
\subseteq \Omega_{x_0}$.  On the other hand,  as we showed in Section 3,  the set of the solutions of the equation 
$\elle u=0$ is a Doob harmonic space in $\Omega$. Then, by  Theorem \ref{abstract_harnack},  there exists a positive 
constant $C=C(x_0, K, \Omega,\elle)$ such that 
$$\sup_K u \le C u(x_0)\,$$
for every non-negative solution $u$ of  $\elle u=0$ in $\Omega$.
\end{proof}

\begin{proof}[Proof of Corollary \ref{corollariosergio}]
 We set $\widetilde x : = (x,x_{n+1})$, $\widetilde\Omega:= \Omega \times ]-1,1[$, $\widetilde K: = K \times [- \frac{1}{2}, \frac{1}{2}]$ and $\widetilde u(\widetilde x): = u(x)$ for every $\widetilde x\in \widetilde\Omega$. 
We observe that the $\widetilde \elle$-propagation set of $(x_0,0)$, $\widetilde \picorsivo_{(x_0,0)}(\widetilde\Omega)$, equals 
 $\picorsivo_{x_0}(\Omega) \times ]-1,1[$. Then $K\subseteq \mathrm{int} \picorsivo_{x_0}(\Omega)$ if and only if 
 $\widetilde K\subseteq \mathrm{int} \widetilde \picorsivo_{(x_0,0)}(\widetilde \Omega)$. By Theorem \ref{maintheorem} 
 $$\sup_{\widetilde K} \widetilde u \le C\  \widetilde u(x_0, 0 )\,$$ and the conclusion follows immediately.
 
 \end{proof}

\section{Examples}

In this Section we give two examples of operators for which we give Harnack-type inequalities that, to our knowledge, 
are new. In general, the main step in the application of our Theorem \ref{maintheorem} is the 
characterization of the propagation set $\picorsivo(x_0,\Omega)$ of the operator $\elle$. We recall that the Control 
Theory provides us with several tools useful for this problem. We refer, for example, to the  book \cite[Chapter 
8]{agrachev_sachkov} by Agrachev and Sachkov.

\subsection{A Harnack inequality for the stationary Mumford operator}\mbox{}\\
We consider the operator $\elle =\partial_{x_1}^2 + \sin(x_1) \partial_{x_2} + \cos(x_1) \partial_{x_3}$ in the set: 
\begin{equation} \label{eq-omega-Mum}
  \Omega= ]-a,a[ \times B(0,r)\subseteq \erre \times \erre^2.
\end{equation}
$x_1\in ]-a,a[$ where $a>\pi$, and $(x_2,x_3)\in B(0,r)$, the euclidean ball centered at $0$ with radius $r>0$.
This operator has been introduced by Mumford \cite{mumford_1994} in the study of computer vision problems.
The relevant Harnack inequality of Theorem \ref{maintheorem} takes the following form:
 \begin{theorem} Let $\Omega$ be the set introduced in \eqref{eq-omega-Mum}, with $a>\pi$. For every compact set 
$K\subset \Omega$ there exists a positive constant $C=C(K,\Omega,\elle)$ such that 
$$\sup_K u \le C u(0),$$
for every non-negative solution $u$ of
$$
\partial_{x_1}^2 u + \sin(x_1) \partial_{x_2} u + \cos(x_1) \partial_{x_3}u=0 \quad \text{in} \quad \Omega.
$$
\end{theorem}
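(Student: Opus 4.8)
The plan is to apply Theorem~\ref{maintheorem} directly, so the entire task reduces to verifying that $0\in\Omega$ (clear, since $a>\pi>0$ and $0\in B(0,r)$) and that every compact subset $K\subset\Omega$ is contained in the interior of $\overline{\picorsivo(0,\Omega)}$. Because $\Omega$ is open, it suffices to prove that $\picorsivo(0,\Omega)=\Omega$, i.e.\ that the propagation set of the origin is all of $\Omega$; then $\overline{\picorsivo(0,\Omega)}\supseteq\Omega$ and $\mathrm{int}\,\overline{\picorsivo(0,\Omega)}\supseteq\Omega\supseteq K$ automatically. So the mathematical content is a controllability statement for the control system generated by $X_1=\partial_{x_1}$ and $Y=\sin(x_1)\partial_{x_2}+\cos(x_1)\partial_{x_3}$, where the $X_1$-direction may be used with either sign but the $Y$-direction only with nonnegative coefficient $\mu\ge 0$.

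First I would record the two admissible moves. Along $X_1$ we may translate $x_1$ freely in either direction (keeping $x_2,x_3$ fixed), as long as we stay in $]-a,a[$. Along $Y$, with $x_1$ held at a constant value $s$ and $\mu\ge 0$ the constant value on a subinterval, the point moves as $(x_2,x_3)\mapsto(x_2+\mu t\sin s,\; x_3+\mu t\cos s)$, i.e.\ in the half-line direction $(\sin s,\cos s)$ in the $(x_2,x_3)$-plane. By choosing $s$ to range over $[-\pi,\pi]\subset\,]-a,a[$ (here is exactly where the hypothesis $a>\pi$ is used), the direction $(\sin s,\cos s)$ sweeps out the entire unit circle; hence by concatenating finitely many such $Y$-segments with interlaced $X_1$-moves that reposition $x_1$, we can reach from $(0,0,0)$ any point $(0,\xi_2,\xi_3)$ with $(\xi_2,\xi_3)\in B(0,r)$ — indeed any target in the plane is a positive combination of two unit vectors spanning it, and one can keep the polygonal path inside $B(0,r)$ by taking small steps. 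Finally, one more $X_1$-move brings $x_1$ from $0$ to any prescribed value in $]-a,a[$ without changing $(x_2,x_3)$. This exhibits a propagation path from the origin to an arbitrary point of $\Omega$, proving $\picorsivo(0,\Omega)=\Omega$.

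The one point requiring a little care — and the main obstacle — is keeping the polygonal path in the $(x_2,x_3)$-plane inside the ball $B(0,r)$ while still reaching points near the boundary of the ball: a naive two-leg path using directions that happen to point outward could exit $\Omega$. This is handled by a standard convexity/subdivision argument: given a target $p\in B(0,r)$, connect $0$ to $p$ by the straight segment, which lies in the convex set $B(0,r)$, and approximate that segment by a finite concatenation of short moves in directions $(\sin s_k,\cos s_k)$ chosen from the circle; since at each vertex the remaining displacement is some vector in the plane and the circle of available directions is full, each short leg can be realized by one $Y$-segment (preceded by an $X_1$-repositioning of $x_1$ to the needed angle and followed by one returning it), and the intermediate points stay within an arbitrarily small neighbourhood of the segment, hence inside $B(0,r)$. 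With $\picorsivo(0,\Omega)=\Omega$ established, Theorem~\ref{maintheorem} applies with $x_0=0$ and the given $K$, yielding the constant $C=C(K,\Omega,\elle)$ and the inequality $\sup_K u\le C\,u(0)$ for every nonnegative solution, which is the assertion.
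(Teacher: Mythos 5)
Your proposal is correct and follows essentially the same route as the paper: reduce to showing $\picorsivo(0,\Omega)=\Omega$, use $a>\pi$ to reposition $x_1$ at any angle $s\in[-\pi,\pi]$ so that the $Y$-direction $(\sin s,\cos s)$ sweeps the whole unit circle, and concatenate $\pm X_1$-legs with $Y$-legs to reach an arbitrary point. The only (harmless) difference is that your subdivision of the planar segment into many short $Y$-legs is unnecessary: since every direction is available, the paper takes $t^*=\arg(z_2,z_3)$ and traverses the radius from $(0,0)$ to $(z_2,z_3)$ in a single $Y$-leg, which stays in $B(0,r)$ automatically.
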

\begin{proof}
In view of Theorem \ref{maintheorem}, we need only to prove that in this case the {\it propagation set} 
$\picorsivo(0,\Omega)$ agrees with $\Omega$. With this aim, we fix any point $z=(z_1,z_2,z_3)$ in $\Omega$, and we 
construct a $\elle$-{\it propagation path} steering $0$ to $z$. Note that, in our case, the vector fields defined in 
\eqref{eq-XY} are 
$$
  X=\partial_{x_1} \qquad \text{and} \qquad Y=  \sin(x_1) \partial_{x_2} + \cos(x_1) \partial_{x_3}.
$$
We connect $0$ and $z$ by a path $\gamma: [0,T] \to \Omega$ such that $\gamma'(t) = \pm X(\gamma(t))$ in the first 
interval $[0, t_1]$, then $\gamma'(t) = Y(\gamma(t))$ in the second interval $[t_1, t_2]$, and $\gamma'(t) = \pm 
X(\gamma(t))$ in the third interval $[t_2, T]$, for $t_1, t_2, T$ such that $0 \le t_1 \le t_2 \le T$ chosen as 
follows.

We set $t^* = \arg(z_2,z_3) \in ]- \pi, \pi] \subset ]-a,a[$, and we choose $t_1 := |t^*|$. If $t^* >0$, the function 
$\gamma(t) = (t,0,0)$ is a solution of $\gamma'(t) = X$, for $t \in [0,t_1]$, $\gamma(0)=0$. If $t^* <0$ we consider 
$\gamma(t) = (-t,0,0)$. In both cases, we have that $\gamma'(t) = \pm X(\gamma(t))$. If $t^* = 0$ we simply skip this 
step.

We next set $t_2 = t_1 + \sqrt{z_2^2 +z_3^2}$, and we choose $\gamma$ such that $\gamma'(t) = Y(\gamma(t))$ for $t_1 < 
t < t_2$.  Also in this case, if 
$(z_2,z_3)=(0,0)$, we skip this step. 
We conclude the construction of $\gamma$ by choosing $s^* = z_1 - t^*$, $T = t_2 + |s^*|$ and following the same method 
used in the first step. The path $\gamma$ then writes as follows.

\begin{eqnarray*}
\gamma(t)= \left\{\begin{array}{ccc}&(\pm t,0,0) & \quad \mbox{ if }\qquad 0\le t \le t_1, \\ \\
&(t^\ast , (t - t_1)  \cos t^*, (t - t_1)  \sin t^*)& \mbox{ if }\quad t_1 \le  t \le t_2,  \\ \\
& (t^\ast  \pm (t - t_2),z_2, z_3)& \mbox{ if }\quad  t_2 \le t \le T. 
\end{array} \right.
\end{eqnarray*}
\end{proof}

\begin{remark} The above construction can be reproduced to translated cylinders
\begin{equation*} 
  \Omega_y= ]y_1 -a,y_1+ a[ \times B((y_2,y_3),r)\subseteq \erre \times \erre^2,
\end{equation*}
for every $y = (y_1,y_2,y_3) \in \erre^3$. We find $\picorsivo((y_1,y_2,y_3),\Omega_y) = \Omega_y$.

We point out that, on the other hand, the geometry of the propagation set $\picorsivo(0,\Omega)$ changes completely as 
the width of the interval $]-a,a[$ is smaller than $2 \pi$. For instance, if we consider the set
\begin{equation*} 
  \widetilde \Omega = ]-\pi/2, \pi/2[ \times B(0,r)\subseteq \erre \times \erre^2,
\end{equation*}
we easily see that $\picorsivo(0,\widetilde \Omega) = \widetilde \Omega \cap \big\{ x_3 > 0 \big \}$. This fact is in accordance with the invariance of the operator $\elle$ with respect to the following left translation introduced in \cite{bonfiglioli_lanconelli_matrix}. Denote $x=(t,z), y=(s,w) \in \erre \times \mathbb{C}$. then 
$$ x\circ y:= (t+s, z + w e^{it}).$$ 

\end{remark}
\subsection{A Harnack inequality for a degenerate Ornstein Uhlenbeck operator}\mbox{}\\
We consider the operator $\elle =\partial_{x_1}^2 + x_1 \partial_{x_2}$ in the set 
\begin{equation} \label{eq-omega-OU}
\Omega= ]-a,a[ \times ]-b,b[
\end{equation}
for some positive $a$ and $b$.

As in the case of Mumford operator, Theorem \ref{maintheorem} gives an \emph{elliptic} Harnack inequality.
\begin{theorem} Let $\Omega$ be the set introduced in \eqref{eq-omega-OU}. For every compact set 
$K\subset \Omega$ there exists a positive constant $C=C(K,\Omega,\elle)$ such that 
$$\sup_K u \le C u(0),$$
for every non-negative solution $u$ of
$$
\partial_{x_1}^2 u + x_1 \partial_{x_2} u=0 \quad \text{in} \quad \Omega.
$$
\end{theorem}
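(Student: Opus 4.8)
The plan is to apply Theorem~\ref{maintheorem} to the operator $\elle = \partial_{x_1}^2 + x_1 \partial_{x_2}$ on $\Omega = \, ]-a,a[\, \times\, ]-b,b[$, so it suffices to show that the $\elle$-propagation set $\picorsivo(0,\Omega)$ coincides with $\Omega$; the Harnack constant for any compact $K\subset\Omega$ then follows at once, since $K$ lies in the interior of $\overline{\picorsivo(0,\Omega)}=\overline\Omega$. (One should first check that $\elle$ satisfies (H1) and (H2): it is a Kolmogorov-type operator, hypoelliptic by H\"ormander's condition since $[\partial_{x_1}, x_1\partial_{x_2}] = \partial_{x_2}$ together with $\partial_{x_1}$ spans $\erre^2$, and the same holds for $\elle^*$ and $\elle-\beta$; (H2) holds because $a_{11}\equiv 1$.)

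The vector fields here are $X = \partial_{x_1}$ and $Y = x_1\partial_{x_2}$. The key observation is that $Y$ can push in the $x_2$-direction with \emph{either} sign, provided one first moves $x_1$ to a point of the appropriate sign using the reversible field $\pm X$. So, given a target $z = (z_1, z_2)\in\Omega$, I would build a propagation path $\gamma$ in three pieces: first move along $\pm X$ from $0$ to a point $(c, 0)$ with $c$ chosen of the same sign as $z_2$ (and $|c|<a$), which is legitimate since $\lambda_1$ may change sign; then move along $Y$ (with $\mu \ge 0$), which from $(c,0)$ advances $x_2$ at constant rate $c$, so after time $|z_2|/|c|$ we reach $(c, z_2)$; finally move along $\pm X$ again from $(c, z_2)$ to $(z_1, z_2)$. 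One must take care that the middle leg stays inside $]-b,b[$ in the $x_2$-coordinate: since $x_2$ moves monotonically from $0$ to $z_2$ and $|z_2|<b$, this holds, and the first and third legs keep $x_2$ fixed. The degenerate case $z_2 = 0$ is handled by skipping the middle leg and moving directly along $\pm X$; if $z_2\neq 0$ one may simply take $c = z_2/|z_2|\cdot\min(|z_1|+1, a)/2$ or any convenient value of the right sign with $|c|<a$.

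I do not expect a serious obstacle here: the construction is completely explicit and parallels the Mumford-operator argument given just above in the paper, only simpler, because the $x_1$-interval imposes no wrap-around constraint (there is no periodicity, unlike the $\sin/\cos$ case where $a>\pi$ was needed). The only point requiring a moment's attention is that the middle leg uses $Y$ with a \emph{nonnegative} multiplier $\mu$, so the displacement in $x_2$ over that leg is $\big(\int \mu\big)\cdot c$, which has the sign of $c$; this is why the sign of $c$ must be matched to the sign of $z_2$ in the first leg. Once $\picorsivo(0,\Omega)=\Omega$ is established, the theorem is immediate from Theorem~\ref{maintheorem}.

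\begin{proof}
By Theorem~\ref{maintheorem} it suffices to show that the $\elle$-propagation set $\picorsivo(0,\Omega)$ equals $\Omega$, since then any compact $K\subset\Omega$ is contained in the interior of $\overline{\picorsivo(0,\Omega)} = \overline\Omega$. The operator $\elle = \partial_{x_1}^2 + x_1\partial_{x_2}$ satisfies (H1) (it is hypoelliptic by H\"ormander's theorem, as $\partial_{x_1}$ and $[\partial_{x_1}, x_1\partial_{x_2}] = \partial_{x_2}$ span $\erre^2$ at every point, and the same applies to $\elle^*$ and to $\elle-\beta$) and (H2), since $a_{11}\equiv 1$.

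Here the vector fields of \eqref{eq-XY} are $X = \partial_{x_1}$ and $Y = x_1\partial_{x_2}$. Fix $z = (z_1,z_2)\in\Omega$. If $z_2 = 0$, take $\gamma(t) = (\pm t, 0)$ for $t\in[0,|z_1|]$ with the sign of $z_1$; this is a propagation path from $0$ to $z$. If $z_2\neq 0$, choose a real number $c$ with the same sign as $z_2$ and $0<|c|<a$, for instance $c = \tfrac{a}{2}\,\mathrm{sgn}(z_2)$. Define
\begin{eqnarray*}
\gamma(t) = \left\{\begin{array}{ll}
\big(\tfrac{c}{|c|}\,t,\ 0\big) & \text{if } 0\le t\le t_1 := |c|, \\[4pt]
\big(c,\ c\,(t - t_1)\big) & \text{if } t_1 \le t \le t_2 := t_1 + \tfrac{|z_2|}{|c|}, \\[4pt]
\big(c + \tfrac{z_1 - c}{|z_1 - c|}\,(t - t_2),\ z_2\big) & \text{if } t_2 \le t \le T := t_2 + |z_1 - c|.
\end{array}\right.
\end{eqnarray*}
(If $z_1 = c$ the last leg is empty.) On $[0,t_1]$ and $[t_2,T]$ we have $\gamma'(t) = \pm X(\gamma(t))$; on $[t_1,t_2]$ the first coordinate is the constant $c$, so $Y(\gamma(t)) = c\,\partial_{x_2}$ and $\gamma'(t) = Y(\gamma(t))$, with multiplier $1\ge 0$ as required. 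Moreover $\gamma$ stays in $\Omega$: the first coordinate ranges over $[-|c|, |c|]\cup[\min(c,z_1),\max(c,z_1)]\subset\,]-a,a[$, and the second coordinate is $0$ on the first and, on the middle leg, moves monotonically from $0$ to $z_2$, hence stays in $]-b,b[$ since $|z_2|<b$; it equals $z_2$ on the last leg. Thus $\gamma$ is an $\elle$-propagation path from $0$ to $z$, so $z\in\picorsivo(0,\Omega)$. Since $z$ was arbitrary, $\picorsivo(0,\Omega) = \Omega$, and the conclusion follows from Theorem~\ref{maintheorem}.
\end{proof}
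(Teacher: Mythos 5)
Your proof is correct and follows essentially the same route as the paper: the paper also reduces the statement to showing $\picorsivo(0,\Omega)=\Omega$ via Theorem \ref{maintheorem} and describes the same strategy (use $\pm X$ to place $x_1$ at a value whose sign matches the desired $x_2$-displacement, then flow along $Y$), but it omits the explicit formulas and refers to a figure instead. Your version simply writes out the three-leg path in full, and the details check out.
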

\begin{proof}
We prove that, also in this case, the {\it propagation set} $\picorsivo(0,\Omega)$ agrees with $\Omega$. The vector 
fields defined in \eqref{eq-XY} are 
$$
  X=\partial_{x_1} \qquad \text{and} \qquad Y=  x_1 \partial_{x_2}.
$$
We choose an integral curve $\gamma$ such that $\gamma'(t) = \pm X(\gamma(t))$ in some intervals. A curve like that 
writes as $\gamma(t) = (\widetilde x_1 \pm t, \widetilde x_2)$. In particular, we will use the field $X$ to increase or 
decrease the first coordinate $x_1$. In some other intervals we choose $\gamma'(t) = Y(\gamma(t))$. Such a curve 
writes as $\gamma(t) = (\widetilde x_1, \widetilde x_2 + \widetilde x_1 t)$. In this case, we rely on the sign of 
$\widetilde x_1$ to increase or decrease the second component $x_2$. We prefer not to give the details of the construction and 
to refer to the following figure.

%
\vspace{0.5cm}\begin{center}
\scalebox{.80}{
\begin{pspicture}(-5.5,-4)(5.5,4)
  \psaxes[Dx=6, Dy=5]{->}(0,0)(-5,-4)(5,4)
  \psline(-4,-3)(4,-3)(4,3)(-4,3)(-4,-3)
  \psline[linewidth=1.5pt]{*-*}(0,0)(2,0)(2,2)(-2,2)(-2,-2)
  \uput[0](5,0){$x_1$}
  \uput[0](0,4){$x_2$}
  \uput[-8](2,-.2){$\gamma(t_1)$}
  \uput[0](2,2.1){$\gamma(t_2)$}
  \uput[225](-2,2.4){$\gamma(t_3)$}
  \uput[180](-2,-2){$\gamma(T)$}
  \psline[linewidth=1.5pt]{<-*}(.2,-1)(1.2,-1)
  \psline[linewidth=1.5pt]{*->}(1.2,-1)(2.2,-1)
  \uput[90](1.2,-1){$\pm X$}
  \psline[linewidth=1.5pt]{*->}(-1.7,1.5)(-1.7,.5)
  \uput[0](-1.7,1){$\! \! Y = x_1 \partial_{x_2}$}
  \psline[linewidth=1.5pt]{*->}(2.2,.5)(2.2,1.7)
  \uput[0](2.2,1){$\! \! Y = x_1 \partial_{x_2}$}
\end{pspicture}}\end{center}
\end{proof}
%

%

\begin{remark} The above result fails as  
\begin{equation*} 
  \Omega= ]a_1,a_2[ \times ]-b,b[\subseteq  \erre^2,
\end{equation*}
and $a_1$ and $a_2$ have the same sign. In particular, if $a_1$ and $a_2$ are both positive, and we consider $x_0 = 
\left(\frac{a_1 + a_2}{2},0\right)$, we have $\picorsivo(x_0,\Omega) = \Omega \cap \big\{ x_2 > 0 \big \}$. On the 
contrary, if $a_1$ and $a_2$ are both negative, we have $\picorsivo(x_0,\Omega) = \Omega \cap \big\{ x_2 < 0 \big \}$.
\end{remark}

\section*{Acknowledgments}
%
%

The authors have been partially supported by the Gruppo Nazionale per l'Analisi Matematica, la Probabilit\`a e le
loro Applicazioni (GNAMPA) of the Istituto Nazionale di Alta Matematica (INdAM).

\end{document}